\newcommand{\Section}[1]{\section{#1} \setcounter{equation}{0}}
\newtheorem{thm}{Theorem}
\newtheorem{lem}{Lemma}
\theoremstyle{remark}
\theoremstyle{definition}
\begin{document}
\large
\title{Mean Value Theorems for Binary Egyptian Fractions}
\author{Jingjing Huang and Robert C. Vaughan}
\address{JH: Department of Mathematics, McAllister Building,
Pennsylvania State University, University Park, PA 16802-6401,
U.S.A.}
\email{huang@math.psu.edu}
\address{RCV: Department of Mathematics, McAllister Building,
Pennsylvania State University, University Park, PA 16802-6401,
U.S.A.} 
\email{rvaughan@math.psu.edu}
\subjclass[2010]{Primary 11D68, Secondary 11D45}
\thanks{Research supported by NSA grant number H98230-09-1-0015.\\}
\begin{abstract}
In this paper, we establish two mean value theorems for the number of solutions of the Diophantine equation 
$\frac{a}{n}=\frac{1}{x}+\frac{1}{y}$, in the case when $a$ is fixed and $n$ varies and in the case when both $a$ and $n$ vary. 
\end{abstract}
\maketitle
\Section{Introduction} 
\noindent The solubility of the diophantine equation
\begin{equation}
\label{e:eq1}
\frac{a}{n}=\frac{1}{x_1}+\frac{1}{x_2}+\cdots+\frac{1}{x_k},
\end{equation}
in positive integers $x_1, x_2, \ldots, x_k$ has a long history. See, for example, Guy \cite{Guy} 
for a detailed survey on this topic and a more extensive bibliography.  When $k\ge3$ it is still an open question as to whether the equation is always soluble provided that $n>n_0(a,k)$.  When $k=3$ the strongest result in this direction is Vaughan \cite{Vau}, \cite{Vau1} (see also Zun \cite{Shen}, and Viola \cite{Vio} for a related equation).  In this memoir we are concerned with the case $k=2$.  In that case it is known that for any given $a>2$ there are infinitely many $n$ for which the equation is insoluble.  For example, the criterion enunciated in the first paragraph of \S3 shows that no $n$ with all its prime factors $p$ of the form $p\equiv 1\pmod a$ has such a representation.  However the number
\begin{equation}
\label{e:eq2}
R(n;a)={\rm{card}}\left\{(x, y)\in \mathbb{N}^2:\frac{a}{n}=\frac{1}{x}+
\frac{1}{y}\right\}
\end{equation}
of representations has an interesting and complicated multiplicative structure and can be studied in a number of ways.  Here we consider various averages
\begin{equation}
\label{e:eq3}
S(N;a)=\sum_{\substack{n\leq N\\(n,a)=1}}R(n;a),
\end{equation}
$$T(N;a)=\sum_{n\leq N}R(n;a)$$
and
$$U(N)=\sum_{a}S(N;a).$$
Croot, {\it et al} \cite{Cro} have shown that 
$$U(N)=\frac{1}{4} CN (\log N)^3+\O\left(\frac{N(\log N)^3}{\log\log N}\right),$$
and in Theorem 2 below we obtain a significant strengthening.  However, in the main result of this paper, Theorem 1, below, we show that it is possible to obtain a strong asymptotic formula without the necessity of averaging over $a$.

\begin{thm}
\label{t:thm1}
$$S(N;a)=\frac{3}{\pi^2a}\Bigg(
\prod_{p|a}\frac{p-1}{p+1}
\Bigg)N\big((\log N)^2+c_1(a)\log{N}+c_0(a)\big)+\Delta(N;a)$$
where 
$$c_1(a)=6\gamma-4\frac{\zeta'(2)}{\zeta(2)} -2 +\sum_{p|a}\frac{6p+2}{(p-1)^2}\log p$$
and 
$$c_0(a)=- (\log a)^2+(\log a)\sum_{p|a}\frac{\log p}{p-1} +O( a\phi(a)^{-1}\log a),$$
and
$$\Delta(N;a)\ll N^{\frac12}(\log(N))^5\frac{a}{\phi(a)}\prod_{p|a}\left(
1-p^{-1/2}
\right)^{-1}$$
uniformly for $N\ge 4$ and $a\in\mathbb N$.
\end{thm}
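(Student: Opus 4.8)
The plan is to begin with the combinatorial skeleton. Clearing denominators in $a/n=1/x+1/y$ gives the equivalent equation $(ax-n)(ay-n)=n^2$ with both factors positive, so that for $(n,a)=1$ one has the clean identity
$$R(n;a)=\#\{d\mid n^2: d\equiv -n\pmod a\},$$
the congruence on the complementary divisor $n^2/d$ being automatic once $(n,a)=1$. Writing each divisor of $n^2$ in the shape $d=h\alpha^2$, $n^2/d=h\beta^2$, $n=h\alpha\beta$ with $\gcd(\alpha,\beta)=1$, the hypothesis $(n,a)=1$ forces $(\alpha\beta,a)=1$ and reduces the congruence to $a\mid\alpha+\beta$, whence
$$S(N;a)=\#\{(h,\alpha,\beta): h\alpha\beta\le N,\ \gcd(\alpha,\beta)=1,\ a\mid\alpha+\beta,\ (h,a)=1\}.$$
This recasts the theorem as a three‑dimensional lattice‑point count under the hyperbola $h\alpha\beta\le N$ with a congruence and a coprimality constraint.

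For the main term I would pass to the Dirichlet series $D_a(s)=\sum_{(n,a)=1}R(n;a)n^{-s}$. Separating the free variable $h$ gives $D_a(s)=\zeta_a(s)G_a(s)$, where $\zeta_a(s)=\prod_{p\nmid a}(1-p^{-s})^{-1}$ and $G_a(s)=\sum_{\gcd(\alpha,\beta)=1,\,a\mid\alpha+\beta}(\alpha\beta)^{-s}$. Detecting $a\mid\alpha+\beta$ by the characters modulo $a$ and evaluating the resulting coprime Euler products yields
$$G_a(s)=\frac{1}{\phi(a)\,\zeta_a(2s)}\sum_{\chi\bmod a}\chi(-1)\,L(s,\chi)L(s,\overline\chi).$$
The principal character contributes $\zeta_a(s)^3/(\phi(a)\zeta_a(2s))$, a triple pole at $s=1$, whose Laurent expansion (together with that of $1/s$ in Perron's formula) produces exactly $N\big((\log N)^2+c_1(a)\log N+c_0(a)\big)$; using $\prod_{p\mid a}(1-p^{-2})=\prod_{p\mid a}(1-p^{-1})(1+p^{-1})$ the leading coefficient collapses to $\tfrac{3}{\pi^2 a}\prod_{p\mid a}\frac{p-1}{p+1}$, while the occurrences of $\gamma$, $\zeta'(2)/\zeta(2)$ and $\sum_{p\mid a}\log p$ in $c_1(a)$, and the explicit $(\log a)^2$ part of $c_0(a)$, are read off by differentiating $\zeta_a$ and $\zeta_a(2s)$ at $s=1$. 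The non‑principal characters, for which $L(s,\chi)L(s,\overline\chi)$ is regular at $s=1$, contribute only the simple pole of $\zeta_a(s)$; bounding $\sum_{\chi\ne\chi_0}|L(1,\chi)|^2$ then supplies the remaining $O(a\phi(a)^{-1}\log a)$ in $c_0(a)$.

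For the error term I would avoid shifting contours, which would cost an unwanted $N^{\varepsilon}$, and instead return to the elementary form $S(N;a)=\sum_{(\alpha,\beta)}M(N/(\alpha\beta);a)$, where $M(X;a)=\#\{h\le X:(h,a)=1\}=\tfrac{\phi(a)}{a}X-\sum_{d\mid a}\mu(d)\psi(X/d)+O(1)$ and $\psi(t)=\{t\}-\tfrac12$. The main term reproduces the polynomial above, and $\Delta(N;a)$ is governed by the two‑dimensional divisor‑type error sum
$$\sum_{\substack{\gcd(\alpha,\beta)=1\\ a\mid\alpha+\beta}}\ \sum_{d\mid a}\mu(d)\,\psi\!\left(\frac{N}{d\alpha\beta}\right).$$
Applying Dirichlet's hyperbola method in the pair $(\alpha,\beta)$, splitting at $\alpha\beta\asymp N^{1/2}$ and using the symmetry $\alpha\leftrightarrow\beta$ together with the cancellation in $\psi$, bounds this by $N^{1/2}$ up to logarithmic powers. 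The count of admissible pairs at scale $N^{1/2}$—those with $a\mid\alpha+\beta$, $\gcd(\alpha,\beta)=1$ and $\alpha,\beta$ coprime to $a$—is exactly what produces the factor $a\phi(a)^{-1}\prod_{p\mid a}(1-p^{-1/2})^{-1}$, the Euler product reflecting the relevant local densities on the line $\Re s=\tfrac12$.

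The hardest part will be this error analysis, specifically securing the clean saving $N^{1/2}$ \emph{uniformly} in $a$ with only an explicit Euler‑product dependence rather than an unspecified $N^{\varepsilon}$ or a lossy power of $a$. Two points demand care: first, the near‑diagonal terms $\alpha=\beta$ (which force $\alpha=\beta=1$ and $a\mid 2$) are of size $N\gg N^{1/2}$ and so must be evaluated exactly and folded into the main term rather than discarded into $\Delta$; second, the $\psi$‑sum must be estimated so that its dependence on $a$ is tracked term‑by‑term through the divisors $d\mid a$ and the progression $a\mid\alpha+\beta$, which is where the uniformity in $a$ is genuinely won or lost.
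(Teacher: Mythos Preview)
Your main--term analysis is essentially the paper's: the Dirichlet series you assemble,
\[
D_a(s)=\frac{1}{\phi(a)}\sum_{\chi\bmod a}\chi(-1)\,\frac{L(s,\chi_0)}{L(2s,\chi_0)}L(s,\chi)L(s,\overline\chi),
\]
is exactly the object the paper obtains (the parametrisation $d=h\alpha^2$, $n=h\alpha\beta$ is just a repackaging of the paper's $u=n_1n_2^2$, $n=n_1n_2n_3$ with $n_1$ squarefree). The residue calculation and the treatment of $\sum_{\chi\neq\chi_0}|L(1,\chi)|^2$ that you sketch are likewise what the paper does.

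The genuine gap is in the error term. Your belief that contour shifting ``would cost an unwanted $N^{\varepsilon}$'' is mistaken, and your proposed elementary replacement does not deliver $N^{1/2}$. Once you sum $M(N/(\alpha\beta);a)$ over the admissible pairs, the $\psi$--sum you write down is a three--factor divisor problem (generating function $\zeta(s)^3/\zeta(2s)$ up to the local factors at $p\mid a$); the hyperbola method in $(\alpha,\beta)$, however you split it, leaves you summing an $O((N/h)^{1/2})$ error over the remaining variable and produces at best an exponent of the shape $N^{2/3}$ or $N^{3/4}$, not $N^{1/2}$. There is no cancellation in $\sum\psi(N/(d\alpha\beta))$ available by symmetry alone; to beat the trivial bound one needs exponential--sum or Voronoi input, which is exactly what the contour method supplies in disguise.

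What the paper actually does is shift the Perron integral to $\Re s=\tfrac12$ and bound the vertical integral \emph{on average over $\chi$} by H\"older's inequality together with the $\varepsilon$--free fourth--moment estimates
\[
\int_{-T}^{T}|\zeta(\tfrac12+it)|^4\,dt\ll T(\log T)^4,\qquad
\sum_{\chi\bmod a}\int_{-T}^{T}|L(\tfrac12+it,\chi)|^4\,dt\ll aT(\log aT)^4.
\]
This is precisely what produces the clean $N^{1/2}(\log N)^5$ with the explicit factor $\dfrac{a}{\phi(a)}\prod_{p\mid a}(1-p^{-1/2})^{-1}$: the Euler product arises from bounding $|L(s,\chi_0)/L(2s,\chi_0)|$ on $\Re s=\tfrac12$ (using $(1+p^{-1/2})/(1-p^{-1})=(1-p^{-1/2})^{-1}$), and the $a/\phi(a)$ comes from the $a$ in the fourth--moment bound against the $1/\phi(a)$ in front. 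So the route to the stated error is analytic, not elementary; replace your $\psi$--sum plan by the contour shift plus these mean--value theorems.
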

\bigskip 

Since 
$$T(N;a)=\sum_{d|a} S\left(
\frac{N}d;\frac{a}d
\right)$$
it is a straightforward exercise to obtain the corresponding asymptotic expansion for $T$.\par

The main novelty in this paper is the employment, for the first time in this area, of complex analytic techniques from multiplicative number theory.  In view of this the referee has speculated on the utility of assuming the Generalised Riemann Hypothesis (GRH) in possibly improving the error term here significantly.  This is unlikely with the proof in its present form, since the the main theoretical input from Dirichlet $L$--functions is {\it via} Lemma \ref{l:lem5} below and the bounds there are at least as strong as can be established on GRH apart possibly from the power of the logarithm.  However, in view of the aforementioned criterion in \S2, the underlying problem has some affinity with the generalised divisor problem in the case of $d_3(n)$ and it is conceivable that, by pursuing methods related to that problem, an error bound of the form 
$$O(N^{\theta})$$ 
can be obtained with 
$$\frac13 <\theta <\frac12.$$
\begin{thm} \label{t:thm2}
We have $$U(N)=\frac{1}{4} CN (\log N)^3+O\big(N(\log N)^2\big),$$
where $\displaystyle C=\prod_p\left(1-3p^{-2}+2p^{-3}\right)$.
\end{thm}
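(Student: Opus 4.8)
The plan is to read $U(N)$ as a lattice point count. Every pair $(x,y)\in\mathbb N^2$ determines a unique reduced fraction $\frac1x+\frac1y=\frac{x+y}{xy}=\frac an$ with $(a,n)=1$, and this pair contributes to $U(N)$ exactly when $n\le N$; hence $U(N)=\#\{(x,y)\in\mathbb N^2:n(x,y)\le N\}$, where $n(x,y)$ is the reduced denominator of $(x+y)/(xy)$. Writing $d=\gcd(x,y)$ and $x=du$, $y=dv$ with $(u,v)=1$, a short computation gives $\gcd(x+y,xy)=d\gcd(u+v,d)$, so that $n(x,y)=duv/\gcd(u+v,d)$ and
$$U(N)=\#\{(d,u,v)\in\mathbb N^3:(u,v)=1,\ duv\le N\gcd(u+v,d)\}.$$
Setting $e=\gcd(u+v,d)$ and $d=ef$, the condition $\gcd(u+v,d)=e$ becomes $e\mid u+v$ together with $\gcd((u+v)/e,f)=1$, while the inequality collapses to $fuv\le N$. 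Summing out $e$ for fixed $f$ and $(u,v)$ leaves the number of divisors of $u+v$ coprime to $f$, so that
$$U(N)=\sum_{f\ge1}\ \sum_{\substack{(u,v)=1\\ fuv\le N}}h_f(u+v),\qquad h_f(w)=\#\{g\mid w:(g,f)=1\}.$$
This direct route is preferable to summing Theorem~\ref{t:thm1} over $a$, where the range $a\le 2N$ forces the large-$a$ regime in which the stated main term no longer dominates.

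From here I would extract the asymptotics by the Dirichlet hyperbola method. Expanding $h_f(u+v)=\sum_{g\mid u+v,\,(g,f)=1}1$ and counting coprime pairs $(u,v)$ in the hyperbolic region $uv\le N/f$ subject to the congruence $u+v\equiv0\pmod g$ produces a main term of size $A(g)\zeta(2)^{-1}(N/f)\log(N/f)$ for a multiplicative density $A(g)\asymp1/g$, with a hyperbola remainder of order $(N/f)^{1/2}d(g)$. The three powers of $\log N$ then arise from three successive summations: the hyperbola count in $(u,v)$, the divisor sum over $g$, and the outer sum over $f$. Equivalently, and more transparently for the constant, the associated Dirichlet series factors as $\sum_n\psi(n)n^{-s}=\zeta(s)D(s)$ with $\psi(n)=\sum_{(a,n)=1}R(n;a)$ and $D(s)=\sum_{(u,v)=1}(uv)^{-s}\sum_{g\mid u+v}\prod_{p\mid g}(1-p^{-s})$; since the inner weight has mean value of order $\log$, $D$ carries a pole of order three at $s=1$ and $\zeta(s)D(s)$ a pole of order four, which is the analytic source of the $N(\log N)^3$ term. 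A standard Perron/contour argument — or, since only a one-logarithm saving is required, the elementary hyperbola bookkeeping above — then gives $U(N)=\frac14CN(\log N)^3+O(N(\log N)^2)$ once the leading Laurent coefficient of $\zeta(s)D(s)$ at $s=1$ is shown to equal $\tfrac32C$.

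The hard part is the exact determination of the constant and the uniform control of the error. The value of $C$ emerges only after assembling the local densities $A(g)$, the coprimality factor $\zeta(2)^{-1}$ coming from $(u,v)=1$, and the divisor weight $\prod_{p\mid g}(1-p^{-s})$ into a single Euler product; each prime $p$ must contribute a factor $1-3p^{-2}+2p^{-3}=(p-1)^2(p+2)/p^3$, and reconciling all three sources of local correction simultaneously — rather than the naive $3/\pi^2$ obtained by ignoring them — is where the real work lies, as is the emergence of the factor $\tfrac14$. In parallel one must sum the hyperbola remainders $(N/f)^{1/2}d(g)$ over $g$ and $f$, together with the errors in the congruence densities and in replacing the exact gcd conditions by coprimality, and show that their total is $O(N(\log N)^2)$; keeping these estimates uniform in $g$ and $f$ across the full ranges $g,f\ll N$ is the main obstacle.
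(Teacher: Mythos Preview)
Your reformulation of $U(N)$ via $x=du$, $y=dv$ with $(u,v)=1$ is correct and is a genuinely different starting point from the paper's. The paper instead uses the divisor parametrization $u\mid n^2 \Leftrightarrow u=n_1n_2^2$, $n=n_1n_2n_3$ with $n_1$ squarefree, arriving at
\[
U(N)=\sum_{n_1}\mu^2(n_1)\sum_{n_2n_3\le N/n_1}\ \sum_{\substack{a\mid n_2+n_3\\(a,\,n_1n_2n_3)=1}}1,
\]
which is structurally parallel to your $\sum_f\sum_{(u,v)=1,\,fuv\le N}h_f(u+v)$: three multiplicative variables, a divisor-type sum on the sum of two of them, and coprimality constraints --- but with ``$n_1$ squarefree'' playing the role of your ``$(u,v)=1$''. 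Both routes feed into the same hyperbola-plus-M\"obius endgame; the paper finishes by reducing to a short contour integral for $\zeta(1+s)D(1+s)N^{s+1}s^{-3}$ with $D(s)=\prod_p(1-3p^{-2s}+2p^{-3s})$, and the same would work for your Dirichlet series once its leading Laurent coefficient is identified.

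Where your proposal has a real gap is the error accounting. If you fix $g$ first and then count coprime $(u,v)$ with $uv\le M=N/f$ and $g\mid u+v$ by the hyperbola, the remainder is of order $M^{1/2}$ (up to divisor factors); but $g$ ranges over divisors of $u+v$, and $u+v$ can be as large as $M$, so summing an $M^{1/2}$ error over $g\le M$ gives $M^{3/2}$, which is fatal. The paper avoids this by reversing the order: it runs the hyperbola on $(n_2,n_3)$ and keeps the divisor variable $a$ \emph{inside}, so that for each $n_2\le\sqrt M$ one has $a\le n_2+M/n_2\ll M/n_2$, and the $O(1)$ error from the count of $n_3\equiv -n_2\pmod a$, summed over this range of $a$ and then over $n_2$, yields only $O(M\log M)$. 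The identical device --- summing your $g$ inside the hyperbola with the constraint $g\le u+v\le u+M/u$ --- would rescue your argument, but as you have written it (``sum the hyperbola remainders $(N/f)^{1/2}d(g)$ over $g$ and $f$'') it does not go through. You correctly flag this as ``the main obstacle'', but the resolving idea is missing.
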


The referee has drawn our attention to the Zentralblatt review of \cite{Cro} where the reviewer adumbrates a proof of a result somewhat weaker than Theorem \ref{t:thm2}. 

This paper is organized as follows. In \S2, we state several lemmas which are needed in the proof of Theorem \ref{t:thm1}. In \S3, we present an analytic proof of Theorem \ref{t:thm1} based on Dirichlet L-functions. And in \S4, an essentially elementary proof of Theorem \ref{t:thm2} is given. Finally, in \S5, we list some open questions in this area.

\Section{Preliminary Lemmas}

We state several lemmas before embarking on the proof of Theorem \ref{t:thm1}.  The content of Lemma \ref{l:lem1} can be found, for example, in Corollary 1.17 and Theorem 6.7 of Montgomery \& Vaughan \cite{MV}, and Lemma \ref{l:lem2} can be deduced from Theorem 4.15 of Titchmarsh \cite{Tit} with $x=y=(|t|/2\pi)^{1/2}$.
\newtheorem{lemma}[thm]{Lemma}
\begin{lem}
\label{l:lem1}
When $\sigma\geq 1$ and $|t|\geq2$, we have $$\frac{1}{\log|t|}\ll \zeta(\sigma+it)\ll \log|t|.$$
\end{lem}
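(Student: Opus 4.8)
The plan is to treat the two inequalities separately: the upper bound by elementary summation, and the lower bound by the classical three-term argument of de la Vallée Poussin.

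For the upper bound I would start from the standard expansion, valid for $\sigma>0$ and any integer $N\ge1$,
$$\zeta(s)=\sum_{n\le N}n^{-s}+\frac{N^{1-s}}{s-1}-s\int_{N}^{\infty}\frac{\{u\}}{u^{s+1}}\,du,$$
obtained by Abel summation. The range $\sigma\ge2$ is trivial, since there $|\zeta(s)|\le\zeta(2)=O(1)$, so I may assume $1\le\sigma\le2$ and take $N=\lfloor|t|\rfloor\asymp|t|$. The main sum is then $\le\sum_{n\le N}n^{-1}\ll\log|t|$, while each remaining term is $O(1)$: indeed $N^{1-\sigma}\le1$ and $|s-1|\ge|t|$ bound the second term, and $|s|\int_N^\infty u^{-\sigma-1}\,du\le|s|N^{-1}\ll1$ bounds the third (using $|s|\ll|t|$ on $1\le\sigma\le2$). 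This gives $\zeta(\sigma+it)\ll\log|t|$. The same computation works for $\sigma\ge1-A/\log|t|$, since then $N^{1-\sigma}\le|t|^{A/\log|t|}=e^{A}=O(1)$; this slightly enlarged upper bound, together with the companion estimate $\zeta'(s)\ll(\log|t|)^{2}$ obtained by a Cauchy estimate on a disc of radius $\asymp1/\log|t|$, will be needed below.

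For the lower bound the engine is the inequality $3+4\cos\theta+\cos2\theta=2(1+\cos\theta)^{2}\ge0$, which, applied to the Dirichlet series $\log\zeta(s)=\sum_n\Lambda(n)(n^{s}\log n)^{-1}$, gives for $\sigma>1$
$$\zeta(\sigma)^{3}\,|\zeta(\sigma+it)|^{4}\,|\zeta(\sigma+2it)|\ge1.$$
Setting $\sigma_0=1+1/\log|t|$ and using $\zeta(\sigma_0)\le1+1/(\sigma_0-1)\ll\log|t|$ together with the already proved $|\zeta(\sigma_0+2it)|\ll\log|t|$, this immediately yields $|\zeta(\sigma+it)|\gg1/\log|t|$ for every $\sigma\ge\sigma_0$ (monotonicity of $\zeta(\sigma)$ in the real variable disposes of all such $\sigma$ at once). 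It remains to cover the thin strip $1\le\sigma<\sigma_0$, which contains the genuinely delicate boundary $\sigma=1$.

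The hard part is this last strip, and here I would invoke the de la Vallée Poussin zero-free region. Applying the same three-term inequality to $-\mathrm{Re}\,(\zeta'/\zeta)$, combined with the partial-fraction expansion of $\zeta'/\zeta$ over the nontrivial zeros and the enlarged upper bounds above, produces a constant $c>0$ with $\zeta(s)\ne0$ and $\zeta'(s)/\zeta(s)\ll\log|t|$ throughout $\sigma\ge1-c/\log|t|$. The crucial point is then to integrate the logarithmic derivative only over the \emph{short} horizontal segment from $\sigma_0$ to $\sigma$, of length at most $1/\log|t|$:
$$\log\zeta(\sigma+it)=\log\zeta(\sigma_0+it)+\int_{\sigma_0}^{\sigma}\frac{\zeta'}{\zeta}(u+it)\,du.$$
Since the integrand is $\ll\log|t|$ over an interval of length $\le1/\log|t|$, the integral is $O(1)$, whence $|\zeta(\sigma+it)|\gg|\zeta(\sigma_0+it)|\gg1/\log|t|$ for all $1\le\sigma\le\sigma_0$. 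I expect the establishment of the zero-free region and the attendant bound $\zeta'/\zeta\ll\log|t|$ to be the main obstacle, precisely because a naive integration of $\zeta'/\zeta$ over a unit-length interval would only produce the far weaker $|\zeta|\gg|t|^{-C}$; the sharp exponent $1$ on the logarithm hinges on confining the integration to a segment of length $1/\log|t|$ and extracting the starting lower bound at $\sigma_0$ from the three-term inequality.
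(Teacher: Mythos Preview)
Your argument is correct and follows the classical line: Abel summation for the upper bound, the $3+4\cos\theta+\cos2\theta$ inequality for the lower bound at $\sigma_0=1+1/\log|t|$, and then the de la Vall\'ee Poussin zero-free region together with the short-interval integration of $\zeta'/\zeta$ to reach the line $\sigma=1$. The paper itself does not prove this lemma at all; it simply cites Corollary~1.17 and Theorem~6.7 of Montgomery and Vaughan, \emph{Multiplicative Number Theory~I}, and your sketch is precisely the standard proof one finds there (and in Titchmarsh, Chapter~3). So your route and the paper's intended route coincide.
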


\begin{lem}
\label{l:lem2}
When $0\leq\sigma\leq1$ and $|t|\geq2$, we have
$$\zeta(\sigma+it)\ll |t|^\frac{1-\sigma}{2}\log(|t|).$$
\end{lem}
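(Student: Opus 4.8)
The plan is to deduce the bound from the approximate functional equation for $\zeta$, exactly as indicated before the statement: Theorem 4.15 of \cite{Tit} asserts that for $0\le\sigma\le1$, $t>0$, and $2\pi xy=t$ with $x,y\ge1$,
$$\zeta(s)=\sum_{n\le x}n^{-s}+\chi(s)\sum_{n\le y}n^{s-1}+O(x^{-\sigma})+O\big(t^{1/2-\sigma}y^{\sigma-1}\big),$$
where $\chi(s)$ is the factor in the functional equation $\zeta(s)=\chi(s)\zeta(1-s)$. I would take $x=y=(t/2\pi)^{1/2}$, which satisfies $2\pi xy=t$, and then bound each of the four pieces by $t^{(1-\sigma)/2}\log t$. (For $2\le t\le2\pi$, where $x,y<1$, the bound is trivial since $\zeta$ is bounded on this compact set, which avoids the pole at $s=1$; so one may assume $t\ge2\pi$.)

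For the Dirichlet polynomials I would use the elementary estimate, valid uniformly for $0\le\alpha\le1$ and $z\ge2$, that $\sum_{n\le z}n^{-\alpha}\ll z^{1-\alpha}\log(2z)$, obtained by comparison with $\int_1^z u^{-\alpha}\,du$. Applying it with $\alpha=\sigma$ and $z=x\asymp t^{1/2}$ gives $\big|\sum_{n\le x}n^{-s}\big|\le\sum_{n\le x}n^{-\sigma}\ll t^{(1-\sigma)/2}\log t$. For the reflected sum, Stirling's formula yields $|\chi(s)|\asymp(t/2\pi)^{1/2-\sigma}$, while the same estimate with $\alpha=1-\sigma$ gives $\big|\sum_{n\le y}n^{s-1}\big|\le\sum_{n\le y}n^{\sigma-1}\ll y^{\sigma}\log(2y)\asymp t^{\sigma/2}\log t$; multiplying, the exponents combine as $(\tfrac12-\sigma)+\tfrac\sigma2=\tfrac{1-\sigma}2$, so this term is again $\ll t^{(1-\sigma)/2}\log t$. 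The two error terms are $O(1)$, since $x^{-\sigma}\ll1$ and $t^{1/2-\sigma}y^{\sigma-1}\asymp t^{-\sigma/2}\ll1$, and $1\ll t^{(1-\sigma)/2}\log t$ because $t\ge2$. Summing the four contributions proves the estimate for $t>0$, and the case $t<0$ follows at once from $\zeta(\sigma-it)=\overline{\zeta(\sigma+it)}$.

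The only delicate point, and the step I would expect to require the most care, is the uniformity in $\sigma$ of the two sum estimates near the endpoints of $[0,1]$. The crude bound $\sum_{n\le z}n^{-\alpha}\ll z^{1-\alpha}/(1-\alpha)$ has a spurious singularity as $\alpha\to1$ and is useless there; one must instead retain the sharper form $z^{1-\alpha}\log(2z)$, which correctly records the harmonic behaviour $\sum_{n\le z}n^{-1}\asymp\log z$ at $\alpha=1$. This is precisely where the single logarithm in the final bound originates: it comes from the first sum when $\sigma$ is near $1$ and, symmetrically, from the reflected sum when $\sigma$ is near $0$.
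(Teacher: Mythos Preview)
Your proposal is correct and follows precisely the route indicated in the paper, which simply refers to Theorem 4.15 of Titchmarsh with $x=y=(|t|/2\pi)^{1/2}$; you have filled in the details of that deduction accurately, including the uniform bound $\sum_{n\le z}n^{-\alpha}\ll z^{1-\alpha}\log(2z)$ and the handling of the range $2\le|t|\le2\pi$.
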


\begin{lem}
\label{l:lem3}
Let $\chi$ be a non-principle character modulo $a$ and $s=\sigma+it$ and assume that $t\in\mathbb R$.  Then
$$L(s,\chi)\ll \log(a(2+|t|)),\text{ when }\sigma\geq 1$$
and
$$L(s,\chi)\ll \left(a|t|\right)^{\frac{1-\sigma}{2}+\varepsilon},\text{ when }\frac{1}{2}\leq\sigma\leq1.$$
\end{lem}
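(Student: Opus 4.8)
The plan is to treat the two ranges of $\sigma$ separately, using the first range ($\sigma\ge1$) as an ingredient in the second. For $\sigma\ge1$ I would truncate the Dirichlet series. Set $A(u)=\sum_{n\le u}\chi(n)$; since $\chi$ is non-principal the sum over any complete residue system modulo $a$ vanishes, so $A(u)$ reduces to a partial sum over at most $\phi(a)$ terms, whence $|A(u)|\le\phi(a)<a$ uniformly in $u$. Writing $L(s,\chi)=\sum_{n\le X}\chi(n)n^{-s}+\sum_{n>X}\chi(n)n^{-s}$ with $X=a(2+|t|)$, the head is $\ll\sum_{n\le X}n^{-1}\ll\log X\ll\log(a(2+|t|))$ because $n^{-\sigma}\le n^{-1}$ for $\sigma\ge1$, while Abel summation gives
$$\sum_{n>X}\chi(n)n^{-s}=-A(X)X^{-s}+s\int_X^\infty A(u)u^{-s-1}\,du .$$
Using $|A(u)|\le a$ and $\sigma\ge1$, the tail is $\ll aX^{-1}(1+|s|)\ll1$ for $1\le\sigma\le2$ by the choice of $X$; for $\sigma>2$ one instead bounds $|L(s,\chi)|\le\sum_{n\ge1}n^{-\sigma}\ll1$ directly. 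This yields the first inequality.

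For $\frac12\le\sigma\le1$ I would deduce the bound from the functional equation together with the Phragm\'en--Lindel\"of convexity principle. First reduce to the primitive case: if $\chi$ is induced by the primitive character $\chi^\ast$ modulo $q\mid a$ (and $q>1$ since $\chi$ is non-principal), then $L(s,\chi)=L(s,\chi^\ast)\prod_{p\mid a}\bigl(1-\chi^\ast(p)p^{-s}\bigr)$, and for $\sigma\ge0$ the finite product is $\ll\prod_{p\mid a}2=2^{\omega(a)}\ll a^\varepsilon$, so it suffices to bound $L(s,\chi^\ast)$. For the primitive character the functional equation (see Montgomery \& Vaughan \cite{MV}) reads $L(s,\chi^\ast)=\epsilon(\chi^\ast)X(s)L(1-s,\overline{\chi^\ast})$ with $|\epsilon(\chi^\ast)|=1$, and Stirling's formula applied to the gamma quotient in $X(s)$ yields $|X(s)|\ll(q|t|)^{\frac12-\sigma}$ for $|t|\ge2$. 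On the line $\sigma=0$ this and the bound already established for $\sigma\ge1$ give $L(it,\chi^\ast)\ll(q|t|)^{1/2}\log(q(2+|t|))\ll(q|t|)^{1/2+\varepsilon}$, while on $\sigma=1$ we have $L(1+it,\chi^\ast)\ll\log(q(2+|t|))\ll(q|t|)^{\varepsilon}$. Since $L(s,\chi^\ast)$ is entire and of finite order in the strip $0\le\sigma\le1$, Phragm\'en--Lindel\"of interpolates the exponent linearly, giving $L(s,\chi^\ast)\ll(q|t|)^{\frac{1-\sigma}2+\varepsilon}$ throughout that strip; combined with the reduction above and $q\le a$, and noting $2+|t|\asymp|t|$ for $|t|\ge2$, this is the claimed bound.

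The main obstacle is the bookkeeping in the convexity step: one must confirm the finite-order hypothesis of Phragm\'en--Lindel\"of uniformly, extract the factor $(q|t|)^{\frac12-\sigma}$ from the gamma quotient with the correct joint dependence on $q$ and $t$, and verify that the passage from primitive to imprimitive $\chi$ together with the endpoint logarithms is genuinely absorbed into the $\varepsilon$. An alternative that avoids the convexity principle is to invoke the approximate functional equation for $L(s,\chi^\ast)$ with balanced cutoffs $x=y=(q|t|/2\pi)^{1/2}$, in direct analogy with the deduction of Lemma \ref{l:lem2}; there each of the two Dirichlet sums is $\ll(q|t|)^{(1-\sigma)/2}$ by the trivial bound $|\chi^\ast|\le1$, and the multiplier in front of the dual sum has modulus $\ll(q|t|)^{\frac12-\sigma}$, so both contributions are again of the required order.
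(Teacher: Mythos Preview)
Your argument is correct and mirrors the paper's own proof: cite (or, as you do, prove directly by partial summation) the bound for $\sigma\ge1$, feed it through the functional equation to get the bound on the left edge, interpolate by Phragm\'en--Lindel\"of for primitive $\chi^\ast$, and then pass to imprimitive $\chi$ via the finite Euler factor $\prod_{p\mid a,\,p\nmid q}(1-\chi^\ast(p)p^{-s})\ll 2^{\omega(a)}$. The only cosmetic difference is that the paper quotes Corollary~10.10 of \cite{MV} for the left-edge input rather than deriving it from the functional equation, and the approximate-functional-equation alternative you sketch at the end is a legitimate second route that the paper does not pursue.
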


\begin{proof}
The first part follows from Lemma 10.15 of Montgomery \& Vaughan \cite{MV}.  Now suppose that $\chi$ is primitive.  Then by Corollary 10.10 of Montgomery \& Vaughan \cite{MV}, 
$$L(s,\chi)\ll \left(a|t|\right)^{\frac12-\sigma}\log (a(2+|t|))$$
when $\sigma\le 0$.  Then by the convexity principle for Dirichlet series, for example as described in Titchmarsh \cite{Tit} (cf. exercise 10.1.19 of Montgomery \& Vaughan \cite{MV}),
$$L(s,\chi)\ll \left(a|t|\right)^{\frac{1-\sigma}{2}+\varepsilon}$$
when $0\le \sigma\le 1$.  The proof is completed by observing that if $\frac12\le\sigma\le 1$ and $\chi$ modulo 
$a$ is induced by the primitive character $\chi^*$ with conductor $q$, then 
$$L(s,\chi) = 
L(s,\chi^*)\prod_{\substack{p|a\\ p\nmid q}}(1-\chi^*(p)p^{-s}) \ll |L(s,\chi^*)|2^{\omega(a)}.$$
\end{proof}

\begin{lem}
\label{l:lem4}
Let $T\geq 2$, then we have
$$\int_{-T}^{T}|\zeta(\textstyle{\frac{1}{2}}+it)|^4dt\sim \frac{1}{\pi^2}T\log^4T$$
and
$$\sideset{}{^*}\sum_{\substack{\chi\\ \mod{a}}}\int_{-T}^T|L(\textstyle{\frac{1}{2}}+it,\chi)|^4d t\ll \phi(a) T (\log(aT))^4,$$
where $\sideset{}{^*}\sum$ indicates that the sum is over the primitive characters modulo $a$. 
\end{lem}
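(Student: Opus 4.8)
The first assertion is Ingham's classical fourth-power moment theorem. The asymptotic
$$\int_0^T |\zeta(\tfrac12+it)|^4\,dt \sim \frac{1}{2\pi^2}T(\log T)^4$$
is due to Ingham and is recorded, for instance, as Theorem~7.6 of Titchmarsh's treatise on the zeta function. Since $\zeta(\tfrac12-it)=\overline{\zeta(\tfrac12+it)}$, the integrand $|\zeta(\tfrac12+it)|^4$ is even in $t$, so integrating over $[-T,T]$ merely doubles the constant and yields $\frac{1}{\pi^2}T\log^4 T$. I would therefore dispatch the first part as a citation together with this symmetry remark.

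For the second assertion I would run the classical approximate functional equation argument, now averaged over the primitive characters. Writing $L(s,\chi)^2=\sum_{n}\chi(n)d(n)n^{-s}$, the approximate functional equation expresses $L(\tfrac12+it,\chi)^2$ as a Dirichlet polynomial $\sum_{n\le X}\chi(n)d(n)n^{-1/2-it}$ of length $X\asymp a|t|/(2\pi)$, plus a dual sum of comparable length and a negligible error, the uniformity in $a$ and $t$ being controlled by the convexity-type bounds of Lemma~\ref{l:lem3}. One then forms $|L(\tfrac12+it,\chi)|^4=|L(\tfrac12+it,\chi)^2|^2$, integrates over $t\in[-T,T]$, and sums over the primitive $\chi\bmod a$. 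To neutralise the $t$-dependence of the length $X$, I would first split the $t$-range dyadically so that $X$ may be taken fixed on each piece.

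The crux is the resulting double sum over $n,m\le X$. Summing over the primitive characters and invoking orthogonality collapses the character sum: up to Möbius corrections over the divisors of $a$, the quantity $\sum_{\chi}^{*}\chi(n)\bar\chi(m)$ is supported on $n\equiv m\pmod a$ with weight $\asymp\phi(a)$, while the $t$-integral of $(m/n)^{it}$ contributes $2T$ on the true diagonal $n=m$ and $\ll|\log(m/n)|^{-1}$ otherwise. The diagonal $n=m$ thus produces the expected main term,
$$\phi(a)\,T\sum_{\substack{n\le X\\(n,a)=1}}\frac{d(n)^2}{n}\ll \phi(a)\,T(\log aT)^4,$$
via the familiar estimate $\sum_{n\le x}d(n)^2/n\ll(\log x)^4$ together with $X\ll aT$.

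The main obstacle is the off-diagonal contribution, where $n\equiv m\pmod a$ but $n\ne m$, forcing $|n-m|\ge a$. Writing $m=n+ra$ with $r\neq0$, the kernel $|\log(m/n)|^{-1}\ll n/(|r|a)$ combines with $(nm)^{-1/2}$ to leave roughly $|r|^{-1}a^{-1}d(n)d(n+ra)$; applying the shifted divisor bound $\sum_{n\le x}d(n)d(n+h)\ll x(\log x)^2$ uniformly in $h$ and then summing over $r\le X/a$ gives a total $\ll\phi(a)\,\frac{X}{a}(\log X)^3\ll\phi(a)\,T(\log aT)^3$, one logarithm smaller than the main term. Assembling the dyadic pieces, and absorbing the dual-sum and error contributions by the same reasoning, completes the bound. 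I expect the genuinely delicate points to be precisely this uniform off-diagonal estimate and the control of the approximate functional equation error uniformly in both $a$ and $T$; once a mean value (hybrid large sieve) inequality for the polynomials $\sum_{n\le X}\chi(n)d(n)n^{-1/2-it}$ and the shifted divisor correlation are in hand, the remaining steps are routine.
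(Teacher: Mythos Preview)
The paper does not prove this lemma at all: immediately after the statement it writes ``The first formula here is due to Ingham and the second is Theorem~10.1 of Montgomery \cite{Mon}'' and moves on. Your first paragraph therefore matches the paper exactly.

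For the second assertion you go well beyond the paper by sketching the proof behind Montgomery's theorem: approximate functional equation for $L(s,\chi)^2$, squaring and integrating, orthogonality over primitive characters to isolate the diagonal $n=m$, and an Ingham-type shifted-divisor bound for the off-diagonal $n\equiv m\pmod a$, $n\neq m$. This outline is sound and is essentially one of the classical routes to the result; the points you flag as delicate (uniform control of the approximate functional equation in both $a$ and $t$, and the off-diagonal correlation sum) are exactly where the real work sits. What your approach buys is self-containment; what the paper's bare citation buys is brevity and the assurance that the uniformity in $a$ has already been checked carefully. Since Lemma~\ref{l:lem4} is used only as a black-box input to Lemma~\ref{l:lem5} and the subsequent contour-shifting argument, the citation is entirely adequate for the paper's purposes, and your sketch, while correct, is more than the paper requires.
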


The first formula here is due to Ingham \cite{Ing} and the second is Theorem 10.1 of Montgomery \cite{Mon}.

\begin{lem}
\label{l:lem5}
Let $T\geq 2$, then
$$\sum_{\substack{\chi\\ \mod{a}}}\int_{-T}^T|L(\textstyle{\frac{1}{2}}+it,\chi)|^4d t\ll a T (\log(aT))^4,$$
\end{lem}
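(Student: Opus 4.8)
The plan is to reduce the sum over all characters modulo $a$ to a sum over primitive characters of the conductors $q\mid a$, which is exactly what the second formula of Lemma \ref{l:lem4} controls. Every character $\chi$ modulo $a$ is induced by a unique primitive character $\chi^*$ of some conductor $q\mid a$, and this gives a bijection between the characters modulo $a$ and the pairs $(q,\chi^*)$ with $q\mid a$ and $\chi^*$ primitive modulo $q$. As already recorded in the proof of Lemma \ref{l:lem3}, the corresponding $L$-functions are related by
$$L(s,\chi)=L(s,\chi^*)\prod_{\substack{p\mid a\\ p\nmid q}}\bigl(1-\chi^*(p)p^{-s}\bigr).$$
First I would insert this relation and, on the line $\sigma=\tfrac12$, bound each factor trivially by $|1-\chi^*(p)p^{-1/2-it}|\le 1+p^{-1/2}$, obtaining
$$\sum_{\chi\bmod a}\int_{-T}^T\bigl|L(\tfrac12+it,\chi)\bigr|^4\,dt\le\sum_{q\mid a}\Bigl(\prod_{\substack{p\mid a\\ p\nmid q}}\bigl(1+p^{-1/2}\bigr)^4\Bigr)\sideset{}{^*}\sum_{\chi\bmod q}\int_{-T}^T\bigl|L(\tfrac12+it,\chi)\bigr|^4\,dt.$$

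Applying the second estimate of Lemma \ref{l:lem4} to each inner sum, and using $\log(qT)\le\log(aT)$, reduces the whole problem to showing that the arithmetic quantity
$$G(a)=\sum_{q\mid a}\phi(q)\prod_{\substack{p\mid a\\ p\nmid q}}\bigl(1+p^{-1/2}\bigr)^4$$
satisfies $G(a)\ll a$ with an absolute implied constant.

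The crux, and the only real obstacle, is this last estimate, since the crude bound $(1+p^{-1/2})^4\le 16$ would only yield the unacceptable $G(a)\ll 16^{\omega(a)}a$. Instead I would use that $G$ is multiplicative and evaluate its local factors: at a prime power $p^e\,\|\,a$ the divisors $q$ contribute according to the exact power of $p$ dividing them, so the factor is
$$\bigl(1+p^{-1/2}\bigr)^4+\sum_{j=1}^{e}\phi(p^j)=\bigl(1+p^{-1/2}\bigr)^4+p^e-1=p^e\Bigl(1+\frac{(1+p^{-1/2})^4-1}{p^e}\Bigr).$$
Since $\prod_{p\mid a}p^e=a$, this gives $G(a)=a\prod_{p\mid a}\bigl(1+((1+p^{-1/2})^4-1)p^{-e}\bigr)\le a\prod_p\bigl(1+((1+p^{-1/2})^4-1)p^{-1}\bigr)$. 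Finally, expanding $(1+p^{-1/2})^4-1=4p^{-1/2}+O(p^{-1})$ shows that the general Euler factor equals $1+4p^{-3/2}+O(p^{-2})$, so the product over all primes converges to an absolute constant. This yields $G(a)\ll a$, and hence the stated bound.
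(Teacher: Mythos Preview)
Your proof is correct and follows essentially the same route as the paper: reduce each character to its primitive inducing character, bound the extra Euler factors trivially on $\sigma=\tfrac12$, apply Lemma~\ref{l:lem4} to the primitive sum, and then show the resulting arithmetic sum over $q\mid a$ is $\ll a$ via its multiplicativity and local factors. The only cosmetic difference is that the paper bounds the finite product over the slightly larger set $\{p:p\mid a/q\}$ rather than your exact $\{p:p\mid a,\ p\nmid q\}$, which leads to the local factor $p^k\bigl(1+p^{-1}((1+p^{-1/2})^4-1)\bigr)$ in place of your $p^e\bigl(1+p^{-e}((1+p^{-1/2})^4-1)\bigr)$; both give a convergent Euler product and hence $G(a)\ll a$.
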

\begin{proof}
Suppose that the character $\chi$ modulo 
$a$ is induced by the primitive character $\chi^*$ with conductor $q$.   Then the $L$--function in the integrand 
in modulus is
$$|L(\textstyle{\frac{1}{2}}+it,\chi^*)\prod_{p|a,p\nmid q}(1-\chi^*(p)p^{-1/2-it})|\le |L(\textstyle{\frac{1}{2}}+it,\chi^*)|\prod_{p|a/q}(1+p^{-1/2}).$$
Hence by the previous lemma
$$\sum_{\substack{\chi\\ \mod{a}}}\int_{-T}^T|L(\textstyle{\frac{1}{2}}+it,\chi)|^4d t\ll  T (\log(aT))^4\sum_{q|a}\phi(q)\prod_{p|a/q}(1+p^{-1/2})^4.$$
The sum here is
\begin{align*}
\sum_{q|a}\phi(q)\prod_{p|a/q}&\left(
1+p^{-\frac12}
\right)^4\\
&=
\prod_{p^k\|a}\left(
\left(
1+p^{-\frac12}
\right)^4+\sum_{h=1}^{k-1} \phi(p^h)\left(
1+p^{-\frac12}
\right)^4+\phi(p^k)
\right)\\
&=a\prod_{p|a}\left(
1+p^{-1}\big((1+p^{-1/2})^4-1\big)
\right)\ll a.
\end{align*}
\end{proof}
\vfill
\break
\Section{Proof of Theorem 1}
\noindent 
Without loss of generality, we can assume $a\le 2N$, since $R(n;a)=0$ whenever $a>2n$. 
Now we rewrite the equation $\frac{a}{n}=\frac{1}{x}+\frac{1}{y}$ in the form
$$(ax-n)(ay-n)=n^2.$$
After the change of variables $u=ax-n$ and $v=ay-n$, it follows that $R(n;a)$ is the number of ordered pairs of natural numbers $u$, $v$ such that $uv=n^2$ and $u\equiv v\equiv -n\pmod{a}$.\par
Under the assumption that $(n,a)=1$, $R(n;a)$  can be further reduced to counting the number of divisors $u$ of $n^2$ with $u\equiv -n\pmod{a}$.  Now the residue class $u\equiv -n \pmod{a}$ is readily isolated {\it via} the orthogonality of the Dirichlet characters $\chi$ modulo $a$.  Thus we have
\begin{align*}
S(N;a)&=\sum_{\substack{n\leq N\\(n,a)=1}}R(n;a)\\
        &=\sum_{\substack{n\leq N\\(n,a)=1}}\frac{1}{\phi(a)} \sum_{\substack{
\chi\\
\mod{a}}} \Bar{\chi}(-n)\sum_{u|n^2}\chi(u)\\
        &=\frac{1}{\phi(a)}\sum_{\substack{\chi\\
\mod{a}}} \Bar{\chi}(-1)\sum_{n\leq N}\Bar{\chi}(n)\sum_{u|n^2}\chi(u),
\end{align*}
where the condition $(n,a)=1$ is taken care of by the character $\bar{\chi}(n)$.\par
Let 
\begin{equation}
a_n(\chi)=\displaystyle\Bar{\chi}(n)\sum_{u|n^2}\chi(u).
\label{e:e31}
\end{equation} 
Then we have
$$S(N;a)=\frac{1}{\phi(a)}\sum_{\substack{\chi\\ \mod{a}}}\Bar{\chi}(-1)\sum_{n\leq N}a_n(\chi).$$
We analyze this expression through the properties of the Dirichlet series $$f_\chi(s)=\displaystyle\sum_{n=1}^{\infty}\frac{a_n(\chi)}{n^s}.$$ 
\vfill
\break
The condition $u|n^2$ can be rewritten uniquely as $u=n_1n_{2}^2$ and $n=n_1n_2n_3$ with $n_1$ square-free.  Hence, for $\sigma>1$ we have
\begin{align*}
f_\chi(s)&=\sum_{n=1}^{\infty}\frac{\Bar{\chi}(n)}{n^s}\sum_{u|n^2}\chi(u)\\
    &=\sum_{n_1,n_2,n_3=1}^{\infty}\mu(n_1)^2\frac{\Bar{\chi}(n_1n_2n_3)\chi(n_1n_2^2)}{n_{1}^s n_{2}^s n_{3}^s}\\
    &=\sum_{n_1=1}^{\infty}\frac{\mu(n_1)^2\chi_0(n_1)}{n_{1}^{s}}\sum_{n_2=1}^{\infty}\frac{{\chi}(n_2)}{n_{2}^{s}}
    \sum_{n_3=1}^{\infty}\frac{\Bar{\chi}(n_3)}{n_{3}^{s}}
\end{align*}
and so
\begin{equation}
f_\chi(s)=\frac{L(s,\chi_0)}{L(2s,\chi_0)}L(s,\chi)L(s,\bar{\chi}),
\label{e:eq32}
\end{equation}
where $\chi_0$ is the principal character modulo $a$, and this affords an analytic continuation of $f_\chi$ to the whole of $\mathbb C$.

By a quantitative version of Perron's formula, as in Theorem 5.2 of Montgomery \& Vaughan \cite{MV} for example,  we obtain

$$\sideset{}{'}\sum_{n\leq N}a_n(\chi)=\frac{1}{2\pi i}\int_{\sigma_0-iT}^{\sigma_0+iT}f_\chi(s)\frac{N^s}{s}ds+R(\chi),$$
where $\sigma_0>1$ and 
$$ R(\chi)\ll\sum_{\substack{\frac{N}{2}<n<2N\\n\neq N}}|a_n(\chi)|\min\left(1,\frac{N}{T|n-N|}\right)+
\frac{4^{\sigma_0}+N^{\sigma_0}}{T}\sum_{n=1}^{\infty}\frac{|a_n(\chi)|}{n^{\sigma_0}}.$$
Here $\sideset{}{'}\sum$ means that when $N$ is an integer, the term $a_N(\chi)$
is counted with weight $\frac{1}{2}$.

Let $\sigma_0=1+\frac{1}{\log N}$.  By (\ref{e:e31}) we have $|a_n(\chi)|\le 
d(n^2)$.  Thus
$$\sum_{n=1}^{\infty}\frac{|a_n(\chi)|}{n^{\sigma_0}}\ll \zeta(\sigma_0)^3\ll (\log N)^3$$
and so $R(\chi)\ll_\varepsilon N^{1+\varepsilon}T^{-1}$, for any
$\varepsilon>0$.  Hence
$$\sum_{n\leq N}a_n(\chi)=\frac{1}{2\pi i}\int_{\sigma_0-iT}^{\sigma_0+iT}f_\chi(s)\frac{N^s}{s}ds+O\left(\left(\frac{N}{T}+1\right)N^\epsilon\right).$$
The error term here is
$$\ll N^{\varepsilon}$$
provided that
$$T\ge N.$$
The integrand is a meromorphic function in the complex plane and is analytic for all $s$ with $\Re s\ge\frac12$ except for a pole of finite order at $s=1$.  Suppose that $T\ge4$.  By the residue theorem
\begin{align*}
\frac{1}{2\pi i}\int_{\sigma_0-iT}^{\sigma_0+iT}f_\chi(s)\frac{N^s}{s}ds =\,& \text{Res}_{s=1}\left(f_\chi(s)\frac{N^s}{s}\right)+\\
\frac{1}{2\pi i}\left(\int_{\sigma_0-iT}^{\frac{1}{2}-iT}+\int_{\frac{1}{2}-iT}^{\frac{1}{2}+iT}+\int_{\frac{1}{2}+iT}^{\sigma_0+iT}\right)&
\frac{L(s,\chi_0)L(s,\chi)L(s,\bar\chi) N^s}{L(2s,\chi_0)s}ds\\
\end{align*}

We have $L(s,\chi_0)=\zeta(s)\prod_{p|a}(1-p^{-s})$. Hence, by Lemmas 1, 2 and 3 and the fact that 
$\prod_{p|a}(1-p^{-s})\ll\log\log a$ when $\sigma\ge 1$, the contribution from the horizontal paths is 
\begin{align*}
&\ll (\log aT)^2(\log T)(\log\log a))NT^{-1} +T^{-1}(aT)^{\varepsilon}\int_{1/2}^1 (aT)^{\frac{3(1-\sigma)}{2}}N^{\sigma}d\sigma\\
&\ll T^{-1}(aT)^{\varepsilon}N+T^{-1}(aT)^{3/4+\varepsilon}N^{1/2}
\end{align*}
and provided that $a\le 2N$ and $T\ge N^{10}$ this is 
$$\ll N^{-1}.$$
\par
On the other hand, by Lemma 1 the contribution from the vertical path on the right is bounded by
$$N^{\frac12}\left(
\prod_{p|a}(1-p^{-\frac12})^{-1}
\right)(\log T)\sum_{2^k\le T}2^{-k}I(k,\chi)$$
where
$$I(k,\chi)=\int_{-2^{k+1}}^{2^{k+1}}\textstyle|\zeta(\frac12+it)L(\frac12+it,\chi)L(\frac12+it,\Bar\chi)|dt.$$ 
By Lemmas 4 and 5 and H\"older's inequality
\begin{align*}\sum_{\substack{\chi\\ \mod a}} \frac{1}{2\pi i} & \int_{\frac{1}{2}-iT}^{\frac{1}{2}+iT} \frac{L(s,\chi_0)L(s,\chi)L(s,\bar\chi) N^s}{L(2s,\chi_0)s}ds\\
& \ll N^{\frac12}\left(
\prod_{p|a}(1-p^{-\frac12})^{-1}
\right)(\log T)\sum_{2^k\le T} a (k+\log a)^3\\
& \ll N^{\frac12}\left(
\prod_{p|a}(1-p^{-\frac12})^{-1}
\right)a(\log N)^5
\end{align*}
on taking 
$$T=N^{10}.$$
Thus we have shown that
$$S(N;a)=\frac{1}{\phi(a)}\sum_{\substack{\chi\\ \mod a}}\bar\chi(-1)\text{Res}_{s=1}\left(f_\chi(s)\frac{N^s}{s}\right) + \Delta(N;a)$$
where
$$\Delta(N;a)\ll N^{\frac12}(\log N)^5\frac{a}{\phi(a)}\prod_{p|a}\left(
1-p^{-1/2}
\right)^{-1}$$
It remains to compute the residue at $s=1$.\par

By (\ref{e:eq32}) there are naturally two cases, namely, $\chi\neq\chi_0$ and $\chi= \chi_0$.  When $\chi\neq \chi_0$  the integrand has a simple pole at $s=1$ and the residue is
$$\text{Res}_{s=1}\left(
\frac{L(s,\chi_0)L(s,\chi)L(s,\bar\chi) N^s}{L(2s,\chi_0)s}
\right)=\frac{6N}{\pi^2}\left(
\prod_{p|a}\frac{p}{p+1}
\right)|L(1,\chi)|^2.$$
It is useful to have some understanding of the behavior of
$$\frac1{\phi(a)}\sum_{\substack{\chi\not=\chi_0\\ \mod a}} \Bar\chi(-1)|L(1,\chi)|^2.$$
Let $x=a^3$.  Then for non-principal characters $\chi$ modulo $a$, by Abel summation 
$$L(1,\chi)=\sum_{n\le x}\frac{\chi(n)}n + O(a^{-2}).$$ 
Hence
\begin{align*}
\frac1{\phi(a)}\sum_{\substack{\chi\not=\chi_0\\ \mod a}}& \Bar\chi(-1)|L(1,\chi)|^2 \\
&= \frac1{\phi(a)}\sum_{\substack{\chi\not=\chi_0\\ \mod a}} \Bar\chi(-1)\left|
\sum_{n\le x}\frac{\chi(n)}n
\right|^2 +O\big(a^{-1}\big).
\end{align*}
The main term on the right is
$$\frac1{\phi(a)}\sum_{\substack{\chi\\ \mod a}} \Bar\chi(-1)\left|
\sum_{n\le x}\frac{\chi(n)}n
\right|^2 - \frac1{\phi(a)}\Bigg(\sum_{\substack{n\le x\\(n,a)=1}}\frac1n
\Bigg)^2.$$
We have
\begin{align*}
\sum_{\substack{n\le x\\(n,a)=1}}\frac1n&= \sum_{m|a}\frac{\mu(m)}m\sum_{n\le x/m}\frac1n\\
&=\sum_{m|a}\frac{\mu(m)}m(\log(x/m)+\gamma+O(m/x))\\
&=\frac{\phi(a)}a\Bigg(\log x+\gamma+\sum_{p|a}\frac{\log p}{p-1}\Bigg) +O(d(a)/x).
\end{align*}
Hence the second term above is
$$-\frac{\phi(a)}{a^2}\Bigg(\log x+\gamma+\sum_{p|a}\frac{\log p}{p-1}\Bigg)^2 +O(1/a).$$
The first term above is
$$\sum_{\substack{m,n\le x\\(mn,a)=1\\a|m+n}}\frac1{mn}.$$
The terms with $m=n$ contribute
$$\sum_{\substack{m\le x\\(m,a)=1\\a|2m}}\frac1{m^2}\ll a^{-2}$$
and this can be collected in the error term.  The remaining terms are collected together so that $m+n=ak$, $m\not=n$ and $k\le \frac{2x}{a}$.  If necessary by interchanging $m$ and $n$ we can suppose that $m<n$.  Thus the above is
$$\sum_{1\le k\le 2x/a}\sum_{\substack{m\le x\\ 0<ak-m\le x\\ m\le ak/2\\(m,a)=1}}\frac2{m(ak-m)}.$$
On interchanging the order of summation this becomes
$$\sum_{\substack{m\le x\\(m,a)=1}} \frac2m \sum_{\substack{2m/a<k\le(x+m)/a}}\frac1{ak-m}.$$
We now divide the sum over $m$ according as $m>a/2$ or $m\le a/2$.  In the former case
the inner sum can be written as the Stieltjes integral
$$\int_{(2m/a)+}^{(x+m)/a+} \frac{d\lfloor\alpha\rfloor}{a\alpha-m}= \frac{\lfloor(x+m)/a\rfloor}{x} - \frac{\lfloor 2m/a\rfloor}m + \int_{2m/a}^{(x+m)/a} \frac{a\lfloor\alpha\rfloor}{(a\alpha-m)^2}d\alpha.$$
Since $m\le x$ the first term is $\ll 1/a$, and the second term is 0 unless $m\ge \frac{a}2$, in which case it is $\ll 1/a$. Thus these terms contribute $\ll (\log a)/a$ in total.  The integral here is
$$\int_{2m/a}^{(x+m)/a} \frac{a\alpha-m -a(\alpha-\lfloor\alpha\rfloor)+m}{(a\alpha-m)^2}d\alpha=a^{-1}\log(x/m) + O(1/a).$$  
Thus the contribution to our sum is
$$a^{-1}\sum_{\substack{a/2<m\le x\\(m,a)=1}} \frac2m\log(x/m) + O\big((\log a)a^{-1}\big)$$
When $m\le a/2$ the sum over $k$ becomes instead
$$\int_{1-}^{(x+m)/a+} \frac{d\lfloor\alpha\rfloor}{a\alpha-m}= \frac{\lfloor(x+m)/a\rfloor}{x} + \int_1^{(x+m)/a} \frac{a\lfloor\alpha\rfloor}{(a\alpha-m)^2}d\alpha.$$
The first term is $\ll 1/a$ and the integral is
$$\int_1^{(x+m)/a} \frac{a\alpha-m -a(\alpha-\lfloor\alpha\rfloor)+m}{(a\alpha-m)^2}d\alpha=a^{-1}\log(x/(a-m)) + O(1/a).$$
Thus we have shown that
\begin{align*}\frac1{\phi(a)}&\sum_{\substack{\chi\not=\chi_0\\ \mod a}} \Bar\chi(-1)|L(1,\chi)|^2 = a^{-1}\sum_{\substack{m\le x\\(m,a)=1}} {\textstyle\frac2m}\log{\textstyle\frac{x}{m}} - a^{-1}\sum_{\substack{m\le a/2\\(m,a)=1}} {\textstyle\frac2m}\log{\textstyle\frac{a-m}{m}}\\
& -\frac{\phi(a)}{a^2}\Bigg(\log x+\gamma+\sum_{p|a}\frac{\log p}{p-1}\Bigg)^2+ O((\log a)a^{-1})\end{align*}
The first sum on the right is
$$2a^{-1}\sum_{k|a}\frac{\mu(k)}{k} \sum_{n\le x/k} n^{-1}\log(x/kn)$$
and this is readily seen to be
$$a^{-1}\sum_{k|a}\frac{\mu(k)}{k}\Big((\log(x/k))^2+2\gamma\log(x/k)+C\Big) +O(d(a)/(ax))$$
for a suitable constant $C$.  Here the main term is
$$\frac{\phi(a)}{a^2}\Bigg(
\Big(\log x+\gamma+\sum_{p|a}\frac{\log p}{p-1}\Big)^2 +O\big((\log\log(3a))^2\big)
\Bigg).$$
Hence, we have
\begin{align*}
\frac1{\phi(a)}\sum_{\substack{\chi\not=\chi_0\\ \mod a}}& \Bar\chi(-1)|L(1,\chi)|^2\\
&= - a^{-1}\sum_{\substack{m\le a/2\\(m,a)=1}} {\textstyle\frac2m}\log{\textstyle\frac{a-m}{m}} +O( a^{-1}\log 2a).
\end{align*}
The sum over $m$ is
\begin{align*}
\sum_{\substack{m\le a/2\\(m,a)=1}} &{\textstyle\frac2m}\log{\textstyle\frac{a/2}{m}} +O( \log 2a)\\
& = \frac{\phi(a)}{a}\Bigg(
(\log(a/2))^2-2(\log(a/2)\sum_{p|a}\frac{\log p}{p-1}\Bigg) +O(\log 2a)
\end{align*}
\par

When $\chi=\chi_0$, we have
\begin{align*}
f_{\chi}(s)&=\frac{L^3(s,\chi_0)}{L(2s,\chi_0)}\\
    &=\frac{\zeta^3(s)\prod_{p|a}\left(1-\frac{1}{p^s}\right)^3}{\zeta(2s)\prod_{p|a}\left(1-\frac{1}{p^{2s}}\right)}\\
    &=\frac{\zeta^3(s)}{\zeta(2s)}\prod_{p|a}\frac{(p^s-1)^2}{p^s(p^s+1)}.
\end{align*}
Let
$$F(s)=\big((s-1)\zeta(s)\big)^3\zeta(2s)^{-1}s^{-1},$$
$$G(s)=\prod_{p|a}\frac{(p^s-1)^2}{p^s(p^s+1)}$$
and 
$$H(s)=F(s)G(s).$$
Then $H$ has a removable singularity at $s=1$ and we are concerned with the residue of 
$$(s-1)^{-3}N^sH(s)$$
at $s=1$.  This is
$$\frac12 N(\log N)^2H(1)+ N(\log N)H'(1) +\frac12 NH''(1)$$
which it is convenient to rewrite as
$$NH(1)\left(
\textstyle \frac12(\log N)^2 + (\log N)\frac{H'(1)}{H(1)} + \frac{H''(1)}{2H(1)}
\right).$$
Now 
$$\frac{H'(1)}{H(1)}=\frac{F'(1)}{F(1)}+\frac{G'(1)}{G(1)}$$
and
$$\frac{H''(1)}{H(1)}=\frac{F''(1)}{F(1)}+2\frac{F'(1)G'(1)}{F(1)G(1)}+\frac{G''(1)}{G(1)}$$
and $F'(1)/F(1)$ and $F''(1)/F(1)$ can be evaluated in terms of Euler's and Stieltje's constants and $\zeta(2)$ and its derivatives.  In particular
$$\textstyle \frac{F'(1)}{F(1)} = 3\gamma -2\frac{\zeta'(2)}{\zeta(2)}-1.$$
The function $G$ is more interesting.  We have
$$\textstyle \frac{G'(1)}{G(1)} = \sum_{p|a} \frac{3p+1}{(p-1)^2}\log p$$
and
$$\textstyle \frac{G''(1)}{G(1)}=\left(
\frac{G'(1)}{G(1)}
\right)^2 - \sum_{p|a}\frac{3p^3+2p^2+3p}{(p^2-1)^2}(\log p)^2.$$
Thus 
$$\textstyle \frac{G'(1)}{G(1)} \ll \log\log(3a)$$
and
$$\textstyle \frac{G''(1)}{G(1)} \ll (\log\log(3a))^2$$

\hfill $\square$

\section{Proof of Theorem \ref{t:thm2}}
By the same argument in the beginning of section 3, $R(n;a)$ can be reduced to counting the number of divisors $u$ of $n^2$ with $u+n\equiv 0\pmod{a}$. 
Now the condition $u|n^2$ can be rewritten uniquely as $u=n_1n_{2}^2$ and $n=n_1n_2n_3$ with $n_1$ being square-free. Thus
we have 
\begin{eqnarray*}
R(n;a)&=&\sum_{\substack{u|n^2\\a|u+n}}1\\
&=&\sum_{\substack{n_1n_2n_3=n\\a|n_2+n_3}}\mu^2(n_1)
\end{eqnarray*}
and hence 
$$U(N)=\sum_{n_1\le N}\mu^2(n_1)\left(\sum_{n_2n_3\le N/n_1}\sum_{\substack{a|n_2+n_3\\(a,n_1n_2n_3)=1}}1\right).$$

The inner double sum is symmetric in $n_2$ and $n_3$, so writing $M=N/n_1$ and using Dirichlet's method of the hyperbola it is
$$\sum_{n_2\le \sqrt{M}}\sum_{\substack{a\le n_2+M/n_2\\(a,n_1n_2)=1}}\sum_{\substack{n_3\le M/n_2\\n_3\equiv -n_2\pmod{a}}}2
-\sum_{n_2\le \sqrt{M}}\sum_{\substack{a\le n_2+\sqrt{M}\\(a,n_1n_2)=1}}\sum_{\substack{n_3\le \sqrt{M}\\n_3\equiv -n_2\pmod{a}}}1.$$
The second triple sum here is $\ll \sum_{n_2\le \sqrt{M}}\sum_{a\le n_2+\sqrt{M}}\frac{\sqrt{M}}{a}\ll M\log M$, leading to a contribution 
$\ll N(\log N)^2$ in the original sum. The first triple sum is 
$$\sum_{n_2\le\sqrt{M}}\sum_{\substack{a\le n_2+M/n_2\\(a,n_1n_2)=1}}\frac{2M}{a n_2}$$
with an error $\ll M\log M$. The $a$ in the range $(M/n_2,n_2+M/n_2]$ are of order of magnitude $M/n_2$ and there are at most $n_2$ of them, so the total contribution from this part of the sum is $\ll M$, and the contribution from this to the original sum is $\ll N\log N$. Thus we are left with
$$\sum_{n_2\le \sqrt{M}}\sum_{\substack{a\le M/n_2\\(a,n_1n_2)=1}}\frac{2M}{a n_2}.$$  

Now using the M\"obius function to pick out the condition $(a,n_1n_2)=1$, the inner sum over $a$ can be written as
$$\sum_{k|n_1n_2}\frac{\mu(k)}{k}\sum_{b\le M/(n_2k)}\frac{2M}{b n_2}.$$

Put $k_1=(k,n_1)$, $k_2=k/k_1$, $n_1'=n_1/k_1$, so that $k_2|n_2$, $(k_2,n_1')=1$, and let $n_2'=n_2/k_2$. Observe also that for $\mu(n_1)=\mu(n_1'k_1)=\mu(n_1')\mu(k_1)\not=0$ it is necessary that $(n_1',k_1)=1$.  Thus substituting in the original sum gives
$$\sum_{k_1\le N}\sum_{k_2\le N}\frac{\mu(k_1k_2)}{k_1^2k_2^2}\sum_{\substack{n_1'\le N/k_1\\(n_1',k_1k_2)=1}}\frac{\mu^2(n_1')}{n_1'}
\sum_{n_2'\le k_2^{-1}\sqrt{N/(n_1'k_1)}}\,\sum_{b\le N/(n_1'n_2'k_1^2k_2^2)}\frac{2N}{b n_2'}$$
and there are various implications for a non-zero contribution. Thus 
$$n_1'n_2'k_1^2k_2^2\le N$$ 
and this is a more stringent condition on $n_2'$ than $n_2'\le k_2^{-1}\sqrt{N/(n_1'k_1)}$ when $n_2'\le k_1$. Also $n_1'\le N/(k_1^2k_2^2)$ and $k_1k_2\le \sqrt{N}$. The sum over $b$ is
 $$\log\left(N/(n_1'n_2'k_1^2k_2^2)\right)+O(1).$$
Consider the error term here. The sum over $n_1'$ and $n_2'$ contributes
$$\ll N(\log N)^2.$$
Thus one is left to consider
$$\sum_{\substack{k_1,k2\\
k_1k_2\le \sqrt{N}}}\frac{\mu(k_1k_2)}{k_1^2k_2^2}\sum_{\substack{n_1'\le N/(k_1^2k_2^2)\\(n_1',k_2)=1}}\frac{\mu^2(n_1')}{n_1'}
\sum_{\substack{n_2'\le k_2^{-1}\sqrt{N/(n_1'k_1)}\\n_2'\le N/(n_1'k_1^2k_2^2)}}\frac{2N}{n_2'}\log\left(\frac{N}{n_1'n_2'k_1^2k_2^2}\right).$$
The $n_2'$ with $n_2'^2k_2^2n_1'k_1\le N< n_2'n_1'k_1^3k_2^2$ satisfy $n_2'\le k_1$ so they would contribute $\ll N(\log k_1)\log N$ to the innermost sum and hence give a total contribution of $\ll N(\log N)^2$.
Thus we can ignore the condition $n_2'\le N/(n_1'k_1^2k_2^2)$.

Now the the summation over $n_2'$ can be performed and this gives
$$2N\left(\frac12L_1^2+L_1L_2\right)$$
where
$$L_1=\log\frac{\sqrt{N}}{k_2\sqrt{n_1'k_1}}$$ 
and
$$L_2=\log\frac{\sqrt{N}}{k_1k_2\sqrt{n_1'k_1}}$$
with an error $\ll N\log N$ and a total error $\ll N(\log N)^2$. Now let 
$$L=\log\frac{N}{k_1^2k_2^2n_1'},$$ 
then the above expression is easily seen to be a quadratic polynomial in $L$, \emph{i.e.}
\begin{eqnarray*}
&&2N\left(\frac12L_1^2+L_1L_2\right)\\
&=&\frac{1}{2}\left(\frac12(L+\log k_1)^2+(L+\log k_1)(L-\log k_1)\right)\\
&=&\frac14\Big(3L^2+2(\log k_1)L-(\log k_1)^2\Big).
\end{eqnarray*}

Observe that the major contribution comes from the quadratic term in $L$ here, and the other terms contribute $\ll N(\log N)^2$ in the original sum.
So one is left to deal with
$$\frac34\sum_{k_1\le \sqrt{N}}\sum_{k_2\le \sqrt{N}/k_1}\frac{\mu(k_1k_2)}{k_1^2k_2^2}\sum_{\substack{n_1'\le N/(k_1^2k_2^2)\\(n_1',k_1k_2)=1}}\frac{\mu^2(n_1')}{n_1'}N\left(\log\frac{N}{k_1^2k_2^2n_1'}\right)^2$$
$$=\frac34 \sum_{k\le \sqrt N} \frac{\mu(k)d(k)}{k^2} \sum_{\substack{n\le N/k^2\\(n,k)=1}}\frac{\mu^2(n)}{n}N\left(\log\frac{N}{k^2n}\right)^2.$$
When $\theta>0$ it follows by absolute convergence that the above sum is
$$\frac3{4\pi i} \int_{\theta-i\infty}^{\theta+i\infty} \zeta(1+s)D(1+s) \frac{N^{s+1}}{s^3} ds$$
where
$$D(s)=\prod_p\left(
1-\frac{3}{p^{2s}}+\frac{2}{p^{3s}}
\right).$$
The Euler product $D(s)$ converges locally uniformly for $\Re s>\frac12+\delta$ for any $\delta>0$.  Hence, by standard estimates for the Riemann zeta function the vertical path may be moved to the vertical path $\Re s=\psi$ where $-\frac12<\psi<0$, picking up the residue of the pole of order $4$ at $s=0$.  It follows that
\begin{align*}
\frac34 \sum_{k\le \sqrt N} \frac{\mu(k)d(k)}{k^2} \sum_{\substack{n\le N/k^2\\(n,k)=1}}\frac{\mu^2(n)}{n}&N\left(\log\frac{N}{k^2n}\right)^2\\
& = \frac32 N \frac{(\log N)^3}{6} D(1) +O(N\log^2 N).
\end{align*}
This establishes the theorem.

\hfill $\square$

\section{Further Comments}
The corresponding questions for the equation (\ref{e:eq1}) when $k\ge 3$ are still open.  Indeed, whilst it follows from the criterion in the second paragraph of \S3 that
$$R(n;a)\ll n^\varepsilon,$$
and generally one could conjecture that $R_k(n;a)$, the number of solutions of (\ref{e:eq1}) in positive integers, satisfies 
the concomitant bound
$$R_k(n;a)\ll n^\varepsilon,$$
this is far from what has been established.  Indeed, if we define $S_k(N;a)$ for general $k$ by
$$S_k(N;a)=\sum_{\substack{n\leq N\\(n,a)=1}}R_k(n;a)$$  
when $k\ge3$ it has not even been established that
$$S_k(N;a)\ll N^{1+\varepsilon}.$$
\par
It seems likely that
$$S_k(N;a)\sim CN(\log N)^{\alpha},$$
for some positive constants $C$ and $\alpha$ which only depend on $k$ and, in the case of $C$, on $a$.   One can also make similar conjectures for the corresponding $T_k(N;a)$ and $U_k(N)$.

\end{document}